\renewcommand{\i}{{\mathrm{i}}}
\renewcommand{\d}{{\mathrm{d}}}
\renewcommand{\Re}{{\mathfrak{Re}}}
\newtheorem{thm}{Theorem}[section]
\newtheorem{lem}[thm]{Lemma}
\newtheorem{prop}[thm]{Proposition}
\theoremstyle{definition}
\newtheorem{defn}[thm]{Definition}
\theoremstyle{remark}
\newtheorem{rem}[thm]{Remark}
\theoremstyle{remark}
\numberwithin{equation}{section}
\begin{document}

\title[Barnes G-function and Gamma variables]{The barnes G function and its relations with sums
and products of generalized Gamma convolution variables}
\author{A. Nikeghbali}
 \address{Institut f\"ur Mathematik,
 Universit\"at Z\"urich, Winterthurerstrasse 190,
 CH-8057 Z\"urich,
 Switzerland}
 \email{ashkan.nikeghbali@math.uzh.ch}
\author{Marc Yor}
\address{Laboratoire de Probabilit\'es et Mod\`eles Al\'eatoires,
Universit\'e Pierre et Marie Curie, et CNRS UMR 7599, 175 rue du
Chevaleret F-75013 Paris, France.}

\subjclass[2000]{60F99, 60E07, 60E10} \keywords{Barnes G-function,
beta-gamma algebra, generalized gamma convolution variables, random matrices,
characteristic polynomials of random unitary matrices}

\date{\today}

\begin{abstract}
We give a probabilistic interpretation for the Barnes G-function
which appears in random matrix theory and in analytic number theory
in the important moments conjecture due to Keating-Snaith for the
Riemann zeta function, via the analogy with the characteristic
polynomial of random unitary matrices. We show that the Mellin
transform of the characteristic polynomial of random unitary
matrices and the Barnes G-function are intimately related with
products and sums of gamma, beta and log-gamma variables. In
particular, we show that the law of the modulus of the
characteristic polynomial of random unitary matrices can be
expressed with the help of products of gamma or beta variables, and
that the reciprocal of the Barnes G-function has a
L\'{e}vy-Khintchin type representation.  These results  lead us to
introduce the so called generalized gamma convolution variables.

\end{abstract}
\maketitle
\section{Introduction, motivation and main results}
The Barnes G-function, which was first introduced by Barnes in \cite{barnes} (see also \cite{adamchik}), may be defined via its infinite product
representation:
\begin{equation}\label{barnes}
    G\left(1+z\right)=\left(2\pi\right)^{z/2}\exp\left[-\frac{1}{2}\left[\left(1+\gamma\right)z^{2}+z\right]\right]\prod_{n=1}^{\infty}\left(1+\dfrac{z}{n}\right)^{n}\exp\left[-z+\frac{z^{2}}{2n}\right]
\end{equation}
where $\gamma$ is the Euler constant. 

From (\ref{barnes}), one can
easily deduce the following (useful) development of the logarithm of
$G\left(1+z\right)$ for $|z|<1$:
\begin{equation}\label{logbarnes}
    \log G\left(1+z\right)=\dfrac{z}{2}\left(\log
    \left(2\pi\right)-1\right)-\dfrac{1}{2}\left(1+\gamma\right)z^{2}+\sum_{n=3}^{\infty}\left(-1\right)^{n-1}\zeta\left(n-1\right)\dfrac{z^{n}}{n}
\end{equation}where $\zeta$ denotes the Riemann zeta function.

This Barnes G-function  has recently occurred in the work of Keating and
Snaith \cite{Keating-Snaith} in their celebrated moments conjecture
for the Riemann zeta function. More precisely, they consider the set
of unitary matrices of size $N$, endowed with the Haar probability
measure, and they prove the following results:
\begin{prop}[Keating-Snaith \cite{Keating-Snaith}]\label{Jon-Nina}
If $Z$ denotes the characteristic polynomial of a generic
random unitary matrix, considered at any point of the unit circle
(for example $1$), then the following hold:
\begin{enumerate}
\item For $\lambda$ any complex number satisfying $\Re(\lambda)>-1$:
\begin{equation}\label{momentfini}
    \mathbb{E}_{N}\left[|Z|^{2\lambda}\right]=\prod_{j=1}^{N}\dfrac{\Gamma\left(j\right)\Gamma\left(j+2\lambda\right)}{\left(\Gamma\left(j+\lambda\right)\right)^{2}}
\end{equation}
\item For $\Re(\lambda)>-1$:
\begin{equation}\label{momentinfini}
    \lim_{N\rightarrow\infty}\dfrac{1}{N^{\lambda^{2}}}\mathbb{E}_{N}\left[|Z|^{2\lambda}\right]=\dfrac{\left(G\left(1+\lambda\right)\right)^{2}}{G\left(1+2\lambda\right)}
\end{equation}
\end{enumerate}
\end{prop}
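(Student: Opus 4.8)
\emph{Proof strategy.} The plan is to treat the two assertions in turn, deriving \eqref{momentfini} by a direct integration over the unitary group and then obtaining \eqref{momentinfini} from \eqref{momentfini} together with the functional equation of the Barnes $G$-function.

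For part (1), I would start from the Weyl integration formula for $U(N)$ equipped with the Haar measure. Writing the eigenvalues of a generic matrix as $e^{\i\theta_1},\dots,e^{\i\theta_N}$, the characteristic polynomial evaluated at $1$ is $Z=\prod_{j=1}^N(1-e^{\i\theta_j})$, whence
\begin{equation*}
\mathbb{E}_N\bigl[|Z|^{2\lambda}\bigr]=\frac{1}{N!\,(2\pi)^N}\int_{[0,2\pi]^N}\prod_{j=1}^N\bigl|1-e^{\i\theta_j}\bigr|^{2\lambda}\prod_{1\le j<k\le N}\bigl|e^{\i\theta_j}-e^{\i\theta_k}\bigr|^2\,\d\theta_1\cdots\d\theta_N.
\end{equation*}
This is precisely Morris's trigonometric form of the Selberg integral, whose closed-form evaluation yields the product in \eqref{momentfini}. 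Equivalently, and more self-containedly, one may factor the Vandermonde as $\prod_{j<k}|e^{\i\theta_j}-e^{\i\theta_k}|^2=\det(e^{\i(k-1)\theta_j})\,\overline{\det(e^{\i(l-1)\theta_j})}$ and apply Andr\'eief's (Heine's) identity to rewrite the integral as the $N\times N$ Toeplitz determinant $\det(\hat w_{k-l})_{1\le k,l\le N}$ generated by the symbol $w(\theta)=|1-e^{\i\theta}|^{2\lambda}$, the Fourier coefficients being $\hat w_m=(-1)^m\binom{2\lambda}{\lambda+m}=(-1)^m\Gamma(1+2\lambda)/\bigl(\Gamma(1+\lambda+m)\Gamma(1+\lambda-m)\bigr)$ (the alternating sign being immaterial to the determinant). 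Evaluating this Fisher--Hartwig type determinant reproduces \eqref{momentfini}; as a normalization check, both the integral and the product collapse to $1$ at $\lambda=0$.

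For part (2), I would feed \eqref{momentfini} into the functional equation $G(z+1)=\Gamma(z)\,G(z)$, $G(1)=1$, which telescopes each of the three products via $\prod_{j=1}^N\Gamma(j+a)=G(N+1+a)/G(1+a)$. This factorizes
\begin{equation*}
\mathbb{E}_N\bigl[|Z|^{2\lambda}\bigr]=\frac{\bigl(G(1+\lambda)\bigr)^2}{G(1+2\lambda)}\cdot\frac{G(N+1)\,G(N+1+2\lambda)}{\bigl(G(N+1+\lambda)\bigr)^2},
\end{equation*}
so the target constant already appears and it remains to show that the last ratio is asymptotic to $N^{\lambda^2}$. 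This I would obtain from Barnes's asymptotic expansion $\log G(z+1)=\tfrac12 z^2\log z-\tfrac34 z^2+\tfrac12 z\log(2\pi)-\tfrac1{12}\log z+\zeta'(-1)+o(1)$: setting $f(a)=\log G(N+1+a)$, the quantity $f(0)+f(2\lambda)-2f(\lambda)$ is a second difference in the shift $a$, which annihilates every term affine in $a$. Expanding the leading term $\tfrac12(N+a)^2\log(N+a)$ for large $N$ shows its second difference equals $\lambda^2\log N+\tfrac32\lambda^2+o(1)$, while the $-\tfrac34(N+a)^2$ term contributes $-\tfrac32\lambda^2$ and the $\log$, affine and constant remainders contribute $o(1)$; the two $\tfrac32\lambda^2$ pieces cancel. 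Hence $f(0)+f(2\lambda)-2f(\lambda)=\lambda^2\log N+o(1)$, i.e. $G(N+1)G(N+1+2\lambda)/G(N+1+\lambda)^2\sim N^{\lambda^2}$, which gives \eqref{momentinfini}.

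The genuinely substantial step is the exact evaluation underlying part (1) --- either the Selberg/Morris integral or, equivalently, the Fisher--Hartwig Toeplitz determinant --- since part (2) is then only a careful but routine asymptotic analysis. In a self-contained treatment the main technical care goes into the combinatorial bookkeeping of the Toeplitz determinant (or of Selberg's induction), and into tracking the constant terms in the second-difference computation so as to confirm that they cancel and leave exactly the power $N^{\lambda^2}$.
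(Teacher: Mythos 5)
Your proposal is correct, but it takes a genuinely different route from the paper. For part (1) the paper offers no proof at all: formula \eqref{momentfini} is simply cited from Keating--Snaith, so your reduction via the Weyl integration formula to the Morris/Selberg integral (equivalently, by Andr\'eief--Heine, to the Toeplitz determinant with symbol $|1-e^{\i\theta}|^{2\lambda}$, whose Fourier coefficients you compute correctly) is essentially a reconstruction of the original Keating--Snaith argument; note that the closed-form evaluation of that integral (or determinant) is quoted rather than carried out, which is the one substantial input you leave to the literature --- the same input the paper outsources by citation. For part (2) the routes genuinely diverge. You telescope the product via $G(z+1)=\Gamma(z)G(z)$, isolate the constant $\left(G(1+\lambda)\right)^{2}/G(1+2\lambda)$, and control the remaining ratio $G(N+1)G(N+1+2\lambda)/\left(G(N+1+\lambda)\right)^{2}$ with Barnes's asymptotic expansion, exploiting the fact that the second difference $f(0)+f(2\lambda)-2f(\lambda)$ annihilates all terms affine in the shift; your bookkeeping is right (the $\tfrac{3}{2}\lambda^{2}$ produced by $\tfrac12 z^{2}\log z$ indeed cancels against the contribution of $-\tfrac34 z^{2}$), so the ratio is $N^{\lambda^{2}}(1+o(1))$ as claimed. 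The paper instead recovers \eqref{momentinfini} probabilistically, as part (2) of Theorem \ref{Barnes-gammaproduct}: it rewrites \eqref{momentfini} as the identity in law \eqref{KS-gamma} between $|Z_{N}|$ and products of independent gamma variables, proves the Mellin-transform limit \eqref{mellinlimigamma} for $\prod_{j=1}^{N}\gamma_{j}$ using the infinite divisibility of $\log\gamma_{a}$ (Lemma \ref{lemloggam}), the asymptotic Lemma \ref{bonlem}, and the L\'evy--Khintchine-type representation of $1/G(1+z)$ (Proposition \ref{Levy-Khintchintyperepres}), and then obtains \eqref{momentinfini} by the division \eqref{a10}. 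Your approach is shorter and purely analytic, resting only on classical facts about $G$; the paper's approach is longer but yields exactly what the paper is after, namely a probabilistic interpretation of the random matrix factor in terms of gamma variables.
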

Then, using a random matrix analogy (now called the "Keating-Snaith
philosophy"), they make the following conjecture for the moments of
the Riemann zeta function (see
\cite{Keating-Snaith},\cite{Mezzadri}):
$$\lim_{T\rightarrow\infty}\dfrac{1}{\left(\log T\right)^{\lambda^{2}}}\dfrac{1}{T}\int_{0}^{T}\d t\left|\zeta\left(\dfrac{1}{2}+\i t\right)\right|^{2\lambda}=M\left(\lambda\right)A\left(\lambda\right)$$
where $M\left(\lambda\right)$ is the "random matrix factor"
$$M\left(\lambda\right)=\dfrac{\left(G\left(1+\lambda\right)\right)^{2}}{G\left(1+2\lambda\right)}$$
and $A\left(\lambda\right)$ is the arithmetic factor
$$A\left(\lambda\right)=\prod_{p\in\mathcal{P}}\left[\left(1-\dfrac{1}{p}\right)^{\lambda^{2}}\left(\sum_{m=0}^{\infty}\left(\frac{\Gamma(\lambda+m)}{m!\Gamma(\lambda)}\right)^{2}p^{-m}\right)\right]$$
where, as usual, $\mathcal{P}$ is the set of prime numbers.

Due to the importance of this conjecture, as discussed in several papers in \cite{Mezzadri}, it seems interesting to
obtain  probabilistic interpretations of  the non arithmetic part of the conjecture. More precisely, the aim of this paper is twofold:
\begin{itemize}
\item to give a probabilistic interpretation of the "random matrix
factor" $M\left(\lambda\right)$, and more generally of the Barnes
G-function;
\item to understand better the nature of the limit theorem
(\ref{momentinfini}) and its relations with (generalized)
gamma variables.
\end{itemize}
To this end, we first give a probabilistic translation in Theorem \ref{gamma-barnes} of the
infinite product (\ref{barnes}) in terms of a limiting distribution
involving gamma variables (we note that, concerning the Gamma
function, similar translations have been presented in \cite{L.
Gordon} and \cite{fuchs-letta}). Let us recall that a gamma variable
$\gamma_{a}$ with parameter $a>0$ is distributed as:
\begin{equation}\label{gammadef}
    \mathbb{P}\left(\gamma_{a}\in \d t\right)=\dfrac{t^{a-1}\exp\left(-t\right)\d
    t}{\Gamma\left(a\right)},\;\; t>0
\end{equation} and has Laplace transform
\begin{equation}\label{gammalaplace}
    \mathbb{E}\left[\exp\left(-\lambda
    \gamma_{a}\right)\right]=\dfrac{1}{\left(1+\lambda\right)^{a}},\quad
    \Re(\lambda)>-1
\end{equation}and Mellin transform:
\begin{equation}\label{mellingamma}
\mathbb{E}\left[\left(\gamma_{a}\right)^{s}\right]=\dfrac{\Gamma\left(a+s\right)}{\Gamma\left(a\right)},\quad
    \Re(s)>-a
\end{equation}
\begin{thm}\label{gamma-barnes}
If $\left(\gamma_{n}\right)_{n\geq1}$ are independent gamma random
variables with respective parameters $n$, then for $z$ such that $\Re(z)>-1$:
\begin{equation}\label{premiermellinlimit}
\lim_{N\rightarrow\infty}\dfrac{1}{N^{\frac{z^{2}}{2}}}\mathbb{E}\left[\exp\left(-z\left(\sum_{n=1}^{N}\left(\dfrac{\gamma_{n}}{n}\right)-N\right)\right)\right]=\left(A^{z}\exp\left(\dfrac{z^{2}}{2}\right)G\left(1+z\right)\right)^{-1}
\end{equation}where
\begin{equation}\label{a00}
    A=\sqrt{\dfrac{\mathrm{e}}{2\pi}}.
\end{equation}
\end{thm}
The next theorem gives an identity in law  for the
characteristic polynomial which shall lead to a probabilistic
interpretation of the "random matrix factor":
\begin{thm}\label{probabilistic-explanation}
Let $\Lambda$ denote the generic matrix of $U\left(N\right)$, the
set of unitary matrices, fitted with the   Haar probability measure,
and $Z_{N}\left(\Lambda\right)=\det\left(I-\Lambda\right)$. Then the
following hold:
\begin{enumerate}
\item For $\Re(t)>-1$, we have:
\begin{equation}\label{reKS}
    \mathbb{E}\left[|Z_{N}\left(\Lambda\right)|^{t}\right]=\prod_{j=1}^{N}\dfrac{\Gamma\left(j\right)\Gamma\left(j+t\right)}{\left(\Gamma\left(j+\frac{t}{2}\right)\right)^{2}}
\end{equation}
\item Equivalently, in probabilistic terms:
\begin{equation}\label{KS-gamma}
    \prod_{j=1}^{N}\gamma_{j}\ \stackrel{\mbox{\small law}}{=}\ |Z_{N}\left(\Lambda\right)|\prod_{j=1}^{N}\sqrt{\gamma_{j}\gamma_{j}'}
\end{equation}
\end{enumerate}
where all variables in sight are assumed to be independent, and
$\gamma_{j}$, $\gamma_{j}'$ are gamma random variables with
parameter $j$.
\end{thm}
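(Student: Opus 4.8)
The plan is to treat the two assertions separately, establishing (1) directly and then showing that (2) is merely a Mellin-transform reformulation of (1). For the first part, I would simply substitute $t=2\lambda$ in the Keating--Snaith formula (\ref{momentfini}) of Proposition~\ref{Jon-Nina}. The condition $\Re(\lambda)>-1$ there becomes $\Re(t)>-2$, which in particular covers the stated range $\Re(t)>-1$, and the product $\prod_{j=1}^{N}\Gamma(j)\Gamma(j+2\lambda)/(\Gamma(j+\lambda))^{2}$ turns into exactly the right-hand side of (\ref{reKS}). Thus (\ref{reKS}) requires no new argument beyond renaming the variable, noting that $Z_N(\Lambda)=\det(I-\Lambda)$ is the characteristic polynomial evaluated at the point $1$.

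For the equivalence of (1) and (2) the tool is the Mellin transform $s\mapsto\mathbb{E}[X^{s}]$ of a positive variable $X$. First I would compute the transform of the left-hand side of (\ref{KS-gamma}): by independence and the Mellin formula (\ref{mellingamma}),
\begin{equation*}
\mathbb{E}\left[\left(\prod_{j=1}^{N}\gamma_{j}\right)^{s}\right]=\prod_{j=1}^{N}\frac{\Gamma(j+s)}{\Gamma(j)}.
\end{equation*}
Next, using the independence of $|Z_{N}(\Lambda)|$ from the two families $(\gamma_{j})$, $(\gamma_{j}')$ and applying (\ref{mellingamma}) with exponent $s/2$, the transform of the right-hand side factorises as
\begin{equation*}
\mathbb{E}\left[|Z_{N}(\Lambda)|^{s}\right]\cdot\prod_{j=1}^{N}\left(\frac{\Gamma(j+s/2)}{\Gamma(j)}\right)^{2}.
\end{equation*}
Equating these two expressions and solving for $\mathbb{E}[|Z_{N}(\Lambda)|^{s}]$ returns precisely $\prod_{j=1}^{N}\Gamma(j)\Gamma(j+s)/(\Gamma(j+s/2))^{2}$, which is (\ref{reKS}) with $t=s$. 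Hence (\ref{reKS}) holds if and only if the two Mellin transforms coincide.

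To close the argument I would invoke the injectivity of the Mellin transform on positive random variables: writing $X=\mathrm{e}^{Y}$, the quantity $\mathbb{E}[X^{s}]$ is the moment generating function of $Y$, and its finiteness on an open vertical strip determines the law of $Y$, hence of $X$. Both transforms above are finite and analytic for $\Re(s)>-1$ (the binding constraint being the $j=1$ factor $\Gamma(1+s)$ on the left, together with the negative moments of $|Z_{N}(\Lambda)|$), a strip of positive width straddling the imaginary axis. The main point needing care is exactly this last step: checking that the common strip of holomorphy is genuinely nondegenerate and that the Mellin (equivalently, bilateral Laplace) transform is determining there. Once that domain bookkeeping is settled, the identity in law (\ref{KS-gamma}) follows formally from the factorisation displayed above.
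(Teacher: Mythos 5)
Your proposal is correct and follows essentially the same route as the paper, which proves part (1) by citing the Keating--Snaith formula (\ref{momentfini}) (i.e.\ setting $t=2\lambda$) and obtains (\ref{KS-gamma}) from (\ref{reKS}) by recognizing the gamma-function ratios as Mellin transforms via (\ref{mellingamma}). The only difference is one of detail: the paper's proof is two sentences long, whereas you spell out the factorization of the Mellin transforms and the injectivity argument (equality on a vertical strip containing the imaginary axis, hence equality of the characteristic functions of the logarithms), which the paper leaves implicit.
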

The Barnes G-function now comes into the picture via the following
limit results:
\begin{thm}\label{Barnes-gammaproduct}
Let $\left(\gamma_{n}\right)_{n\geq1}$ be independent gamma random
variables with respective parameters $n$; then  the following hold:
\begin{enumerate}
\item for any $\lambda$,
with $\Re(\lambda)>-1$, we have:
\begin{equation}\label{mellinlimigamma}
\lim_{N\to\infty}\dfrac{1}{N^{\lambda^{2}/2}}\mathbb{E}\left[\left(\prod_{j=1}^{N}\gamma_{j}\right)^{\lambda}\right]\exp\left(-\lambda\sum_{j=1}^{N}\psi\left(j\right)\right)=\left(A^{\lambda}G\left(1+\lambda\right)\right)^{-1}
\end{equation}where $A=\sqrt{\dfrac{\mathrm{e}}{2\pi}}$ and
$\psi\left(z\right)=\dfrac{\Gamma'\left(z\right)}{\Gamma\left(z\right)}$.
\item consequently, from (\ref{mellinlimigamma}), together with (\ref{KS-gamma}), we recover the limit theorem (\ref{momentinfini}) of Keating and
Snaith: for $\Re(\lambda)>-1$:
\begin{equation*}
    \lim_{N\rightarrow\infty}\dfrac{1}{N^{\lambda^{2}}}\mathbb{E}_{N}\left[|Z|^{2\lambda}\right]=\dfrac{\left(G\left(1+\lambda\right)\right)^{2}}{G\left(1+2\lambda\right)}
\end{equation*}
\end{enumerate}
\end{thm}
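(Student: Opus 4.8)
The plan for part~(1) is to turn the left-hand side into an explicit product of Gamma values and then to recognise that product as a ratio of Barnes $G$-values. Since the $\gamma_{j}$ are independent, the Mellin transform (\ref{mellingamma}) gives
\begin{equation*}
\mathbb{E}\left[\left(\prod_{j=1}^{N}\gamma_{j}\right)^{\lambda}\right]=\prod_{j=1}^{N}\frac{\Gamma(j+\lambda)}{\Gamma(j)},
\end{equation*}
each factor being finite for $\Re(\lambda)>-1$ since the smallest parameter is $j=1$. Iterating the functional equation $G(z+1)=\Gamma(z)G(z)$, which follows from the product (\ref{barnes}), one has $\prod_{j=1}^{N}\Gamma(j+\lambda)=G(N+1+\lambda)/G(1+\lambda)$ and, at $\lambda=0$, $\prod_{j=1}^{N}\Gamma(j)=G(N+1)$, so that
\begin{equation*}
\mathbb{E}\left[\left(\prod_{j=1}^{N}\gamma_{j}\right)^{\lambda}\right]=\frac{G(N+1+\lambda)}{G(1+\lambda)\,G(N+1)}.
\end{equation*}
This reduces the whole problem to the large-$N$ behaviour of $\log\bigl(G(N+1+\lambda)/G(N+1)\bigr)$ and of $\sum_{j=1}^{N}\psi(j)$.

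For the first input I would use the classical asymptotic expansion
\begin{equation*}
\log G(z+1)=\frac{z^{2}}{2}\log z-\frac{3z^{2}}{4}+\frac{z}{2}\log(2\pi)-\frac{1}{12}\log z+\zeta'(-1)+o(1)
\end{equation*}
as $z\to\infty$, evaluated at $z=N+\lambda$ and at $z=N$; expanding $\log(N+\lambda)=\log N+\lambda/N+O(N^{-2})$ and subtracting, the constant $\zeta'(-1)$ cancels and one is left with
\begin{equation*}
\log\frac{G(N+1+\lambda)}{G(N+1)}=N\lambda(\log N-1)+\frac{\lambda^{2}}{2}\log N+\frac{\lambda}{2}\log(2\pi)+o(1).
\end{equation*}
For the digamma sum I would use $\psi(j)=-\gamma+H_{j-1}$ together with $\sum_{j=1}^{N-1}H_{j}=NH_{N-1}-(N-1)$ and $H_{N-1}=\log N+\gamma-\frac{1}{2N}+O(N^{-2})$ (equivalently, Euler--Maclaurin applied to $\psi$), which yields $\sum_{j=1}^{N}\psi(j)=N(\log N-1)+\frac12+o(1)$.

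Inserting both expansions into the logarithm of the normalised quantity, the explicit factor $-\frac{\lambda^{2}}{2}\log N$ cancels the $\frac{\lambda^{2}}{2}\log N$ coming from the $G$-ratio, while $\lambda\sum_{j=1}^{N}\psi(j)$ exactly cancels the divergent contribution $N\lambda(\log N-1)$; what survives is
\begin{equation*}
\frac{\lambda}{2}\log(2\pi)-\frac{\lambda}{2}-\log G(1+\lambda)=-\lambda\log A-\log G(1+\lambda),
\end{equation*}
by (\ref{a00}), and exponentiating gives (\ref{mellinlimigamma}). I expect the only genuine obstacle to be the bookkeeping of these two asymptotics: one must carry each expansion to the order at which the $N\log N$, $N$ and $\log N$ terms appear and check that every divergent piece cancels, the delicate point being that the constant $\zeta'(-1)$ drops out of the $G$-ratio, so that the surviving finite limit is governed by $G(1+\lambda)$ alone.

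For part~(2) I would feed (\ref{mellinlimigamma}) into the identity in law (\ref{KS-gamma}). Taking $2\lambda$-th moments there and using the independence of $|Z_{N}(\Lambda)|$, $(\gamma_{j})$ and $(\gamma_{j}')$ and the fact that $\gamma_{j}$ and $\gamma_{j}'$ have the same law, one gets
\begin{equation*}
\mathbb{E}\left[|Z_{N}(\Lambda)|^{2\lambda}\right]=\frac{\mathbb{E}\left[\left(\prod_{j=1}^{N}\gamma_{j}\right)^{2\lambda}\right]}{\left(\mathbb{E}\left[\left(\prod_{j=1}^{N}\gamma_{j}\right)^{\lambda}\right]\right)^{2}},
\end{equation*}
which is (\ref{reKS}) with $t=2\lambda$. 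Substituting the equivalent from (\ref{mellinlimigamma}) with exponent $2\lambda$ in the numerator and with exponent $\lambda$ (squared) in the denominator, the factors $\exp\bigl(2\lambda\sum_{j}\psi(j)\bigr)$ and the powers of $A$ cancel identically, the powers of $N$ combine as $N^{2\lambda^{2}}/N^{\lambda^{2}}=N^{\lambda^{2}}$, and the Barnes factors leave precisely $G(1+\lambda)^{2}/G(1+2\lambda)$; dividing by $N^{\lambda^{2}}$ recovers (\ref{momentinfini}). The only point requiring care is the range of $\lambda$: finiteness of the $2\lambda$-th moment forces $\Re(2\lambda)>-1$, and one then extends to $\Re(\lambda)>-1$ by the analyticity in $\lambda$ of both sides.
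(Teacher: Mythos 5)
Your proof is correct, but part (1) follows a genuinely different route from the paper's. You convert the Mellin transform into the closed form $\mathbb{E}\bigl[\bigl(\prod_{j=1}^{N}\gamma_{j}\bigr)^{\lambda}\bigr]=G(N+1+\lambda)/\bigl(G(1+\lambda)G(N+1)\bigr)$ by telescoping the functional equation $G(z+1)=\Gamma(z)G(z)$, and then invoke the classical Barnes asymptotic expansion of $\log G(z+1)$ at infinity. Your bookkeeping checks out: the term $N\lambda(\log N-1)$ cancels against $\lambda\sum_{j}\psi(j)$ (your expansion $\sum_{j=1}^{N}\psi(j)=N(\log N-1)+\tfrac12+o(1)$ is correct), the $\tfrac{\lambda^{2}}{2}\log N$ terms cancel, $\zeta'(-1)$ drops out of the ratio, and the surviving constant $\tfrac{\lambda}{2}\log(2\pi)-\tfrac{\lambda}{2}=-\lambda\log A$ gives exactly $(A^{\lambda}G(1+\lambda))^{-1}$. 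The paper never forms this closed product: it expresses each $\mathbb{E}[\gamma_{j}^{\lambda}]$ through the L\'evy--Khintchine (Malmsten-type) representation of $\log\gamma_{a}$ (Lemma \ref{lemloggam}), sums over $j$, identifies the constant $1+\gamma$ via the elementary Lemma \ref{bonlem}, and recognizes the limiting integral as $(A^{\lambda}G(1+\lambda))^{-1}$ by comparison with Proposition \ref{Levy-Khintchintyperepres}, the L\'evy--Khintchine-type formula for $1/G(1+z)$ derived from the Taylor series (\ref{logbarnes}). The trade-off: your argument is shorter but imports the large-$z$ expansion of $\log G$ as an external input --- essentially the way Keating--Snaith themselves passed from (\ref{momentfini}) to (\ref{momentinfini}), and precisely the kind of analytic fact about $G$ that the paper sets out to re-derive probabilistically --- whereas the paper's argument is self-contained and produces the representation (\ref{Levy-Khintchin}) as a by-product, which is one of its advertised results. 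Your part (2) coincides with the paper's (its equation (\ref{a10}) is your moment identity rearranged); there you are in fact more careful than the paper: the moment identity requires $\Re(2\lambda)>-1$, so the direct argument only covers $\Re(\lambda)>-1/2$, and for $-1<\Re(\lambda)\le-1/2$, where $\mathbb{E}_{N}[|Z|^{2\lambda}]$ is actually infinite, the statement must be read through the analytic continuation of (\ref{reKS}) --- a caveat the paper passes over in silence.
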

We then naturally extend Theorem \ref{gamma-barnes} to the more
general case of sums of the form:
\begin{equation}\label{sum}
    S_{N}=\sum_{n=1}^{N}\left(Y_{n}-\mathbb{E}\left[Y_{n}\right]\right)
\end{equation}where $$Y_{n}=\dfrac{1}{n}\left(y_{1}^{(n)}+\ldots+y_{n}^{(n)}\right),$$ and where $\left(y_{i}^{(n)}\right)_{1\leq i\leq n<\infty}$ are
independent, with the same distribution as a given random variable
$Y$, where $Y$ is a generalized gamma convolution variable (in short GGC), that is an infinitely divisible $\mathbb{R}_{+}$-valued random variable whose L\'{e}vy measure is of the form
\begin{equation}\label{mesurelevyparticulière}
    \nu\left(\d x\right)=\left(\dfrac{\d x}{x}\right)\left(\int\mu\left(\d
    \xi\right)\exp\left(-x\xi\right)\right),
\end{equation}where $\mu\left(\d
    \xi\right)$ is a Radon measure on $\mathbb{R}_{+}$, called the Thorin measure associated to $Y$. We shall
    further assume that $$\int\mu\left(\d
    \xi\right)\dfrac{1}{\xi
    ^{2}}<\infty,$$which, as we shall see is equivalent to the existence
    of a second moment for $Y$.
    
    The GGC variables have been studied by Thorin \cite{thorin} and Bondesson \cite{bondesson}, see, e.g., \cite{jvy} for a recent survey of this topic.
\begin{thm}\label{limitgengam}
Let $Y$ be a GGC variable, and let
$\left(S_{N}\right)$ as in (\ref{sum}). We note
$$\sigma^{2}=\mathbb{E}\left[Y^{2}\right]-\left(\mathbb{E}\left[Y\right]\right)^{2}.$$Then the following limit theorem for $\left(S_{N}\right)$ holds: if
$\lambda>0$,
\begin{equation}\label{generalgammalimit1}
\begin{CD}
\dfrac{1}{N^{\frac{\lambda^{2}\sigma^{2}}{2}}}\mathbb{E}\left[\exp\left(-\lambda
S_{N}\right)\right]@>>N\to\infty> \mathcal{H}\left(\lambda\right),
\end{CD}
\end{equation}where the function $\mathcal{H}\left(\lambda\right)$
is given by:
\begin{equation}\label{Hlambda}
    \mathcal{H}\left(\lambda\right)=\exp\left\{\dfrac{\lambda^{2}\sigma^{2}}{2}\left(1+\gamma\right)+\int_{0}^{\infty}\dfrac{\d y}{y}\Sigma_{\mu}\left(y\right)\left(\exp\left(-\lambda y\right)-1+\lambda
    y-\dfrac{\lambda^{2}y^{2}}{2}\right)\right\},
\end{equation}with
\begin{equation}\label{defdesigmamu}
\Sigma_{\mu}\left(y\right)=\int\mu\left(\d
    \xi\right)\dfrac{1}{\left(2\sinh\left(\dfrac{\xi
    y}{2}\right)\right)^{2}}.
\end{equation}
\end{thm}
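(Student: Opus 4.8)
The plan is to compute the Laplace transform $\mathbb{E}\left[\exp\left(-\lambda S_{N}\right)\right]$ exactly, isolate the divergent part (which will produce the normalisation $N^{\lambda^{2}\sigma^{2}/2}$), and identify the finite remainder with $\mathcal{H}(\lambda)$. First I would exploit the two independence structures: across the index $n$ the summands $Y_{n}-\mathbb{E}[Y_{n}]$ are independent, while within a fixed $n$ the variables $y_{1}^{(n)},\dots,y_{n}^{(n)}$ are i.i.d.\ copies of $Y$. Writing $\psi(\theta)=\log\mathbb{E}\left[\exp(-\theta Y)\right]$ and $m=\mathbb{E}[Y]$, and using $\mathbb{E}[Y_{n}]=m$ together with $\mathbb{E}[\exp(-\lambda Y_{n})]=\bigl(\mathbb{E}[\exp(-(\lambda/n)Y)]\bigr)^{n}$, this gives
\[
\log\mathbb{E}\left[\exp\left(-\lambda S_{N}\right)\right]=\sum_{n=1}^{N}\bigl(n\,\psi(\lambda/n)+\lambda m\bigr).
\]

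Next I would insert the infinitely divisible structure. By the L\'evy--Khintchine representation, $\psi(\theta)=-b\theta+\int_{0}^{\infty}(e^{-\theta x}-1)\,\nu(\d x)$; the drift $b$ cancels against the centering, leaving $n\,\psi(\lambda/n)+\lambda m=\int_{0}^{\infty}\bigl[n(e^{-\lambda x/n}-1)+\lambda x\bigr]\nu(\d x)$. Substituting the Thorin form $\nu(\d x)=\tfrac{\d x}{x}\int\mu(\d\xi)e^{-\xi x}$ and then performing the change of variables $x=ny$ in the $n$-th term reveals the self-similar structure: the $n$-th term becomes $n\int_{0}^{\infty}(e^{-\lambda y}-1+\lambda y)\tfrac{\d y}{y}\int\mu(\d\xi)e^{-n\xi y}$. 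Summing over $n$ and using the generating-function identity $\sum_{n\geq1}n\,e^{-n\xi y}=\bigl(2\sinh(\xi y/2)\bigr)^{-2}$ makes the kernel $\Sigma_{\mu}$ of (\ref{defdesigmamu}) appear directly.

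The delicate point is that the limiting integrand $\tfrac{1}{y}\Sigma_{\mu}(y)(e^{-\lambda y}-1+\lambda y)$ is \emph{not} integrable at $y=0$, because $\Sigma_{\mu}(y)\sim\sigma^{2}/y^{2}$ there; this is exactly where the second-moment identity $\sigma^{2}=\int\mu(\d\xi)/\xi^{2}$ (equivalent to $\mathbb{E}[Y^{2}]<\infty$) enters, obtained from $\sigma^{2}=\int_{0}^{\infty}x^{2}\nu(\d x)$. I would therefore split $e^{-\lambda y}-1+\lambda y=\bigl(e^{-\lambda y}-1+\lambda y-\tfrac{\lambda^{2}y^{2}}{2}\bigr)+\tfrac{\lambda^{2}y^{2}}{2}$. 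The quadratic piece integrates exactly, via $\int_{0}^{\infty}y\,e^{-n\xi y}\,\d y=(n\xi)^{-2}$, to $\tfrac{\lambda^{2}\sigma^{2}}{2}\sum_{n=1}^{N}\tfrac{1}{n}$; since $\sum_{n\leq N}\tfrac{1}{n}=\log N+\gamma+o(1)$, this supplies simultaneously the factor $N^{\lambda^{2}\sigma^{2}/2}$ and the Euler-constant contribution to the limiting exponent. The compensated piece has a fixed-sign integrand (one checks $e^{-\lambda y}-1+\lambda y-\tfrac{\lambda^{2}y^{2}}{2}\leq0$ for $y\geq0,\ \lambda>0$), so as the partial sums $\sum_{n\leq N}n\,e^{-n\xi y}$ increase to $\Sigma_{\mu}$, monotone convergence yields the convergent integral in (\ref{Hlambda}).

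The main obstacle is this last convergence/interchange analysis: justifying Tonelli and monotone convergence in the double $(\xi,y)$-integral and establishing finiteness of $\int_{0}^{\infty}\tfrac{\d y}{y}\Sigma_{\mu}(y)\bigl(e^{-\lambda y}-1+\lambda y-\tfrac{\lambda^{2}y^{2}}{2}\bigr)$ at both endpoints (near $0$ the $O(y^{3})$ vanishing of the test function against $\Sigma_{\mu}(y)\sim\sigma^{2}/y^{2}$ rescues integrability, while near $\infty$ the exponential decay of $\Sigma_{\mu}$ does). As a cross-check I would specialise to $Y$ exponential, where $\mu=\delta_{1}$ and $\sigma^{2}=1$, in which case the statement must reduce to Theorem~\ref{gamma-barnes}; equivalently, integrating the series (\ref{logbarnes}) term by term shows
\[
\int_{0}^{\infty}\frac{\d y}{y}\,\bigl(2\sinh(\xi y/2)\bigr)^{-2}\Bigl(e^{-\lambda y}-1+\lambda y-\tfrac{\lambda^{2}y^{2}}{2}\Bigr)=-\log G\!\left(1+\tfrac{\lambda}{\xi}\right)+\frac{\lambda}{2\xi}\log(2\pi)-\frac{\lambda}{2\xi}-\frac{1+\gamma}{2}\frac{\lambda^{2}}{\xi^{2}},
\]
which, after integration against $\mu(\d\xi)$, ties $\mathcal{H}(\lambda)$ back to the Barnes $G$-function and confirms the precise constant.
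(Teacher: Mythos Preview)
Your overall strategy coincides with the paper's: both start from the L\'evy--Khintchine representation, perform the substitution $x=ny$ to produce the kernel $\sum_{n\le N} n\,e^{-n\xi y}$, and then compensate by $\tfrac{\lambda^{2}y^{2}}{2}$ to separate a divergent quadratic contribution from an absolutely convergent remainder. The organisational difference is that you work directly with the partial sum $\sum_{n\le N} n\,e^{-n\xi y}$ and pass to the limit by monotone convergence, whereas the paper first rewrites this partial sum via the closed form
\[
\sum_{n=1}^{N} n\,e^{-na}=\dfrac{e^{a}(1-e^{-aN})}{(e^{a}-1)^{2}}-\dfrac{Ne^{-aN}}{e^{a}-1},
\]
splits $I_{N}=J_{N}-R_{N}$ accordingly, and evaluates the quadratic piece through Lemma~\ref{bonlem}. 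Your route is shorter and bypasses that lemma entirely.

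There is, however, a genuine discrepancy you should not gloss over. Your quadratic piece integrates exactly to $\tfrac{\lambda^{2}\sigma^{2}}{2}\sum_{n=1}^{N}\tfrac{1}{n}=\tfrac{\lambda^{2}\sigma^{2}}{2}(\log N+\gamma+o(1))$, so your argument produces the constant $\gamma$ in the exponent, not the $1+\gamma$ appearing in (\ref{Hlambda}). This is not a gap in your reasoning; rather, your cleaner organisation exposes a slip in the paper's own proof. The paper's assertion that $\lim_{N\to\infty}R_{N}(\xi,\lambda)=0$ is incorrect: the substitution $y=u/(N\xi)$ shows that in fact $R_{N}(\xi,\lambda)\to-\tfrac{\lambda^{2}}{2\xi^{2}}$, and once this is fed back in, the paper's $1+\gamma$ becomes $\gamma$ as well. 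Your cross-check confirms this: with $\mu=\delta_{1}$ and $\sigma^{2}=1$, formula (\ref{Hlambda}) as stated yields $(2\pi)^{\lambda/2}e^{-\lambda/2}/G(1+\lambda)$, whereas Theorem~\ref{gamma-barnes} yields $(2\pi)^{\lambda/2}e^{-\lambda/2}e^{-\lambda^{2}/2}/G(1+\lambda)$, differing precisely by the factor $e^{-\lambda^{2}/2}$ that replacing $1+\gamma$ by $\gamma$ supplies.
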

Limit results such as (\ref{generalgammalimit1}) are not standard in Probability theory, and we intend to develop a systematic study in a forthcoming paper \cite{JNY}. 
The rest of the paper is organized as follows:
\begin{itemize}
\item in Section 2, we prove Theorems \ref{gamma-barnes},
\ref{probabilistic-explanation} and \ref{Barnes-gammaproduct}. We
also give an interpretation of Theorem \ref{gamma-barnes} in terms
of Bessel processes as well as an important L\'{e}vy-Khintchine type
representation for $1/G\left(1+z\right)$.
\item in Section 3, we compare Theorems \ref{gamma-barnes} and
\ref{Barnes-gammaproduct}, which help us find adequate extensions
for each of them (in particular we shall introduce there generalized
gamma convolution variables and prove Theorem \ref{limitgengam});
\end{itemize}
\section{Proofs of Theorems \ref{gamma-barnes},
\ref{probabilistic-explanation} and \ref{Barnes-gammaproduct} and
additional probabilistic aspects of the Barnes G-function}
\subsection{Proof of Theorem \ref{gamma-barnes} and interpretation in terms of Bessel processes}
\subsubsection{Proof of Theorem \ref{gamma-barnes}}
To prove Theorem  \ref{gamma-barnes}, we simply use the fact that
$$\mathbb{E}\left[\exp\left(-z\dfrac{\gamma_{n}}{n}\right)\right]=\dfrac{1}{\left(1+\dfrac{z}{n}\right)^{n}}.$$Hence,
for $z\geq0$, the quantity
$$\mathbb{E}\left[\exp\left(-z\left(\sum_{n=1}^{N}\left(\dfrac{\gamma_{n}}{n}\right)-N\right)\right)\right]$$equals
$$\dfrac{1}{\prod_{n=1}^{N}\left(1+\dfrac{z}{n}\right)^{n}\exp\left(-z\right)}\equiv\dfrac{1}{D_{N}}.$$We
then write
$$D_{N}\exp\left(\dfrac{z^{2}}{2}\log N\right)=D_{N}\exp\left(\dfrac{z^{2}}{2}\sum_{n=1}^{N}\dfrac{1}{n}\right)\exp\left(\dfrac{z^{2}}{2}\left(\log N-\sum_{n=1}^{N}\dfrac{1}{n}\right)\right)$$
which from (\ref{barnes}) converges, as $N\to\infty$, towards
$$G\left(1+z\right)\left(2\pi\right)^{-z/2}\exp\left(\dfrac{z+z^{2}}{2}\right),$$which
proves formula (\ref{premiermellinlimit}), for $z\geq0$, and thus
for $\Re(z)>-1$ by analytic continuation.
\subsubsection{An interpretation of  Theorem \ref{gamma-barnes} in terms of Bessel processes}
Let $\left(R_{2n}\left(t\right)\right)_{t\geq0}$ denote a
$\mathrm{BES}(2n)$ process, starting from $0$, with dimension $2n$;
we need to consider the sequence
$\left(R_{2n}\right)_{n=1,2,\ldots}$ of such independent processes.
It is well known (see \cite{revuzyor} for example) that:
$$\text{for fixed}\; t>0,\;\;R_{2n}^{2}\left(t\right)\ \stackrel{\mbox{\small law}}{=}\
\left(2t\right)\gamma_{n}.$$Moreover, we have:
$$\text{for fixed}\; t>0,\;\;R_{2n}^{2}\left(t\right)=2\int_{0}^{t}\sqrt{R_{2n}^{2}\left(s\right)}\d
\beta_{s}^{(n)}+2nt$$the stochastic differential equation of
$R_{2n}^{2}$, driven by a Brownian Motion $\beta^{(n)}$. Thus
$$\sum_{n=1}^{N}\dfrac{R_{2n}^{2}\left(t\right)}{2n}\ \stackrel{\mbox{\small law}}{=}\ t\left(\sum_{n=1}^{N}\dfrac{\gamma_{n}}{n}\right).$$We now
write Theorem \ref{gamma-barnes}, for $\Re(z)>-1/t$ as:
\begin{equation}\label{aa}
\lim_{N\rightarrow\infty}\dfrac{1}{N^{\frac{t^{2}z^{2}}{2}}}\mathbb{E}\left[\exp\left(-z\sum_{n=1}^{N}\dfrac{R_{2n}^{2}\left(t\right)-2nt}{2n}\right)\right]=\left(A^{tz}\exp\left(\dfrac{t^{2}z^{2}}{2}\right)G\left(1+tz\right)\right)^{-1}
\end{equation}where
$$A=\sqrt{\dfrac{\mathrm{e}}{2\pi}}.$$We now wish to write the LHS of
(\ref{aa}) in terms of functional of a sum of squared
Orstein-Uhlenbeck processes; indeed, if we write:
$$\sum_{n=1}^{N}\dfrac{R_{2n}^{2}\left(t\right)-2nt}{2n}=2\sum_{n=1}^{N}\int_{0}^{t}\dfrac{1}{2n}\sqrt{R_{2n}^{2}\left(s\right)}\d
\beta_{s}^{(n)},$$the RHS appears as a martingale in $t$, with
increasing process
$$\sum_{n=1}^{N}\int_{0}^{t}\dfrac{1}{n^{2}}R_{2n}^{2}\left(s\right)\d
s,$$and we obtain that (\ref{aa}) may be written as:
$$\lim_{N\rightarrow\infty}\dfrac{1}{N^{\frac{t^{2}z^{2}}{2}}}\mathbb{E}^{(z)}\left[\exp\left(-\dfrac{z}{2}\sum_{n=1}^{N}\int_{0}^{t}\dfrac{1}{n^{2}}R_{2n}^{2}\left(s\right)\d
s\right)\right]=\left(A^{tz}\exp\left(\dfrac{t^{2}z^{2}}{2}\right)G\left(1+tz\right)\right)^{-1}$$where,
under $\mathbb{P}^{(z)}$ the process
$\left(R_{2n}\left(t\right)\right)_{t\geq0}$ satisfies (by
Girsanov's theorem)
\begin{eqnarray*}
  R_{2n}^{2}\left(t\right) &=& 2\int_{0}^{t}\sqrt{R_{2n}^{2}\left(s\right)}\left(\d\widetilde{\beta}_{s}^{(n)}-\dfrac{z}{n}\sqrt{R_{2n}^{2}\left(s\right)}\d s\right)+2nt \\
   &=&
   2\int_{0}^{t}\sqrt{R_{2n}^{2}\left(s\right)}\d\widetilde{\beta}_{s}^{(n)}-\dfrac{2z}{n}\int_{0}^{t}R_{2n}^{2}\left(s\right)\d
   s+2nt
\end{eqnarray*}That is, under $\mathbb{P}^{(z)}$, 
$\left(R_{2n}^{2}\left(t\right)\right)_{t\geq0}$ now appears as the
square of a one dimensional Ornstein-Uhlenbeck process, with
parameter $\left(-\dfrac{z}{n}\right)$.
\subsection{Proof of Theorem \ref{probabilistic-explanation}}
Formula (\ref{reKS}) is due to Keating-Snaith \cite{Keating-Snaith}.
Formula (\ref{KS-gamma}) follows from (\ref{reKS}) once one recalls
formula (\ref{mellingamma}).
\subsection{The characteristic polynomial and beta variables}
One can use the beta-gamma algebra (see, e.g., Chaumont-Yor \cite{chaumont-yor},
p.93-98, and the references therein) and (\ref{KS-gamma}) to represent
(in law) the characteristic polynomial as products of beta
variables. More precisely,
\begin{thm}
With the notations of Theorem \ref{probabilistic-explanation}, we
have:
\begin{equation}\label{aaa}
|Z_{N}|\ \stackrel{\mbox{\small law}}{=}\ 2^{N}\left(\prod_{j=1}^{N}\sqrt{\beta_{\frac{j}{2},\frac{j}{2}}}\right)\left(\prod_{j=2}^{N}\sqrt{\beta_{\frac{j+1}{2},\frac{j-1}{2}}}\right)
\end{equation}
where all the variables in sight are assumed to be independent beta
variables.
\end{thm}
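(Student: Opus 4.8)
The plan is to run the beta--gamma algebra \emph{backwards} on the identity in law (\ref{KS-gamma}): starting from $\prod_{j=1}^{N}\gamma_{j}$, I would rewrite each factor so that an independent copy of $\prod_{j=1}^{N}\sqrt{\gamma_{j}\gamma_{j}'}$ reappears, and then cancel this common factor against the same factor in (\ref{KS-gamma}), leaving exactly the beta product (\ref{aaa}) as the law of $|Z_{N}|$. The only analytic input is the probabilistic form of Legendre's duplication formula.

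First I would record the duplication identity
$$\gamma_{a}\ \stackrel{\mathrm{law}}{=}\ 2\sqrt{\gamma_{a/2}\,\gamma_{(a+1)/2}},\qquad a>0,$$
the two gamma variables on the right being independent. This is verified at once on Mellin transforms: by (\ref{mellingamma}) the $s$-th moment of the right-hand side is $2^{s}\Gamma\!\left(\tfrac{a+s}{2}\right)\Gamma\!\left(\tfrac{a+s+1}{2}\right)\big/\big[\Gamma\!\left(\tfrac{a}{2}\right)\Gamma\!\left(\tfrac{a+1}{2}\right)\big]$, and two applications of $\Gamma(z)\Gamma\!\left(z+\tfrac12\right)=2^{1-2z}\sqrt{\pi}\,\Gamma(2z)$, with $z=\tfrac{a+s}{2}$ and $z=\tfrac{a}{2}$, collapse this to $\Gamma(a+s)/\Gamma(a)$, the $s$-th moment of $\gamma_{a}$.

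Applying this termwise gives $\prod_{j=1}^{N}\gamma_{j}\stackrel{\mathrm{law}}{=}2^{N}\prod_{j=1}^{N}\sqrt{\gamma_{j/2}\,\gamma_{(j+1)/2}}$, and I would then split each half-integer gamma through the elementary relation $\gamma_{c}\stackrel{\mathrm{law}}{=}\beta_{c,\,d-c}\,\gamma_{d}$ (with $\beta$ and $\gamma$ independent), taken at $d=j$. Using two independent families $(\gamma_{j})$, $(\gamma_{j}')$ this reads $\gamma_{j/2}\stackrel{\mathrm{law}}{=}\beta_{j/2,j/2}\,\gamma_{j}$ and $\gamma_{(j+1)/2}\stackrel{\mathrm{law}}{=}\beta_{(j+1)/2,(j-1)/2}\,\gamma_{j}'$, whence
$$\prod_{j=1}^{N}\gamma_{j}\ \stackrel{\mathrm{law}}{=}\ 2^{N}\left(\prod_{j=1}^{N}\sqrt{\beta_{\frac{j}{2},\frac{j}{2}}}\right)\left(\prod_{j=2}^{N}\sqrt{\beta_{\frac{j+1}{2},\frac{j-1}{2}}}\right)\prod_{j=1}^{N}\sqrt{\gamma_{j}\gamma_{j}'}.$$
The $j=1$ factor of the second product is $\beta_{1,0}$, degenerate at $1$, so it drops out and the product starts at $j=2$, precisely as stated.

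The one genuinely delicate point is the cancellation of the common factor. Comparing the last display with (\ref{KS-gamma}) shows that $|Z_{N}|$ and the beta product $B_{N}$ of (\ref{aaa}) satisfy $B_{N}\prod_{j}\sqrt{\gamma_{j}\gamma_{j}'}\stackrel{\mathrm{law}}{=}|Z_{N}|\prod_{j}\sqrt{\gamma_{j}\gamma_{j}'}$ with each factor pair independent; this does not \emph{a priori} entitle one to divide out $\prod_{j}\sqrt{\gamma_{j}\gamma_{j}'}$. I would justify the division on Mellin transforms: taking $s$-th moments factorises both sides, the common factor contributing $\prod_{j=1}^{N}\big(\Gamma(j+s/2)/\Gamma(j)\big)^{2}$, which is finite and nonvanishing for $\Re(s)>-1$; dividing, the Mellin transforms of $|Z_{N}|$ and of $B_{N}$ agree on the line $\Re(s)=0$, where, after the substitution $s=\i\tau$, they are the characteristic functions of $\log|Z_{N}|$ and $\log B_{N}$, so the two logarithms, hence the two variables, have the same law. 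Alternatively one may bypass the cancellation entirely and verify (\ref{aaa}) directly: using the beta moment formula together with the duplication formula exactly as above, $\mathbb{E}[B_{N}^{s}]$ telescopes to $\prod_{j=1}^{N}\Gamma(j+s)\Gamma(j)/\Gamma(j+s/2)^{2}$, which is the $s$-th moment (\ref{reKS}) of $|Z_{N}|$.
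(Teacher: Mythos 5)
Your proof is correct and follows essentially the same route as the paper: factor each $\gamma_{j}$ through the duplication formula and the beta--gamma algebra so that an independent copy of $\prod_{j=1}^{N}\sqrt{\gamma_{j}\gamma_{j}'}$ splits off, then cancel it against the same factor in (\ref{KS-gamma}). You are in fact more careful than the paper on the one delicate point: the paper performs the cancellation of the common independent factor without comment, whereas you justify it via the non-vanishing of its Mellin transform (and also offer a direct Mellin-transform verification of (\ref{aaa}) against (\ref{reKS}) as a fallback).
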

\begin{proof}
To deduce the result stated in the theorem from (\ref{KS-gamma}), we
use the following factorization:
\begin{equation}\label{prov}
    \gamma_{j}\ \stackrel{\mbox{\small law}}{=}\
\sqrt{\gamma_{j}\gamma_{j}'}\xi_{j},
\end{equation}
where on the RHS, we have:
\begin{align*}
\mathrm{for\quad} j=1 & \quad\quad \xi_{1}\ \stackrel{\mbox{\small
law}}{=}\
2\sqrt{\beta_{\frac{1}{2},\frac{1}{2}}} \\
\mathrm{for\quad} j>1 & \quad\quad \xi_{j}\ \stackrel{\mbox{\small
law}}{=}\
2\sqrt{\left(\beta_{\frac{j}{2},\frac{j}{2}}\right)\left(\beta_{\frac{j+1}{2},\frac{j-1}{2}}\right)}.
\end{align*}
Indeed, starting from (\ref{prov}), then multiplying both sides by
$\sqrt{\gamma_{j}\gamma_{j}'}$ and using the beta-gamma algebra, we
obtain that (\ref{prov}) is equivalent to:
$$\gamma_{j}\ \stackrel{\mbox{\small law}}{=}\
2\sqrt{\gamma_{\frac{j}{2}}\gamma_{\frac{j+1}{2}}'}$$which easily
follows from the duplication formula for the gamma function (again, see, e.g., Chaumont-Yor \cite{chaumont-yor},
p.93-98).
\end{proof}
\begin{rem}
In the above Theorem, the factor $2^{N}$ can be explained by the
fact that the characteristic polynomial of a unitary matrix is, in
modulus, smaller than $2^{N}$. Hence the products of beta variables
appearing on the RHS of the formula (\ref{aaa}) measures how the modulus
of the characteristic polynomial deviates from the largest value it
can take.
\end{rem}
\subsection{A L\'{e}vy-Khintchine type representation of $1/G\left(1+z\right)$}
In this subsection, we give a L\'{e}vy-Khintchine representation
type formula for $1/G\left(1+z\right)$ which will be used next to
prove Theorem \ref{Barnes-gammaproduct}.
\begin{prop}\label{Levy-Khintchintyperepres}
For any $z\in\mathbb{C}$, such that $\Re(z)>-1$, one has:
\begin{equation}\label{Levy-Khintchin}
\dfrac{1}{G\left(1+z\right)}=\exp\left\{-\frac{1}{2}\left(\log\left(2\pi\right)-1\right)z+\left(1+\gamma\right)\frac{z^{2}}{2}+\int_{0}^{\infty}\frac{\d
u\;\left(\exp\left(-zu\right)-1+zu-\frac{u^{2}z^{2}}{2}\right)}{u\left(2\sinh\left(u/2\right)\right)^{2}}\right\}
\end{equation}
\end{prop}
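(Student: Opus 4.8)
The plan is to start from the logarithmic series \eqref{logbarnes} for $\log G(1+z)$, negate it to obtain $\log\bigl(1/G(1+z)\bigr)$, and then match the resulting power series in $z$ term by term against the series expansion of the right-hand side of \eqref{Levy-Khintchin}. The right-hand side is the exponential of an explicit quadratic polynomial in $z$ plus an integral; since both sides are exponentials, it suffices to verify that the exponents agree. The quadratic and linear parts already appear literally in \eqref{logbarnes} (with the opposite sign), namely $-\tfrac{z}{2}(\log(2\pi)-1)+\tfrac{1}{2}(1+\gamma)z^2$, which exactly matches the first two terms inside the braces of \eqref{Levy-Khintchin}. So the real content is to show that the integral term reproduces the tail $-\sum_{n\ge 3}(-1)^{n-1}\zeta(n-1)\,z^n/n$.

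First I would Taylor-expand the integrand of \eqref{Levy-Khintchin} in powers of $z$. The subtraction $\exp(-zu)-1+zu-\tfrac{u^2z^2}{2}$ removes the constant, linear, and quadratic terms in $z$, so that
\begin{equation*}
\exp(-zu)-1+zu-\frac{u^2z^2}{2}=\sum_{n=3}^{\infty}\frac{(-1)^{n}u^{n}z^{n}}{n!}.
\end{equation*}
Dividing by $u\bigl(2\sinh(u/2)\bigr)^2$ and integrating term by term (justified for small $|z|$ by absolute convergence, the integrability near $0$ being secured precisely by the cubic-order vanishing of the subtracted expression, and near $\infty$ by the exponential decay of $1/\sinh^2$), the coefficient of $z^n$ becomes
\begin{equation*}
\frac{(-1)^{n}}{n!}\int_{0}^{\infty}\frac{u^{n-1}\,\d u}{\bigl(2\sinh(u/2)\bigr)^{2}}.
\end{equation*}
Thus the whole computation reduces to evaluating these moment integrals and checking that, after reinstating the factor $1/n!$, they deliver the Riemann zeta values with the correct sign.

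The main obstacle, then, is the identity
\begin{equation*}
\int_{0}^{\infty}\frac{u^{n-1}\,\d u}{\bigl(2\sinh(u/2)\bigr)^{2}}=(n-1)!\,\zeta(n-1),\qquad n\ge 3,
\end{equation*}
which I would establish by expanding $\bigl(2\sinh(u/2)\bigr)^{-2}=\bigl(e^{u/2}-e^{-u/2}\bigr)^{-2}=e^{-u}\bigl(1-e^{-u}\bigr)^{-2}=\sum_{k\ge 1}k\,e^{-ku}$ and integrating against $u^{n-1}$ using $\int_0^\infty u^{n-1}e^{-ku}\,\d u=(n-1)!/k^n$, which sums to $(n-1)!\sum_{k\ge1}k^{1-n}=(n-1)!\,\zeta(n-1)$. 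Substituting back, the coefficient of $z^n$ is $\frac{(-1)^n}{n!}(n-1)!\,\zeta(n-1)=\frac{(-1)^n}{n}\zeta(n-1)$, which is exactly $-\bigl[(-1)^{n-1}\zeta(n-1)/n\bigr]$, the negative of the $n$-th coefficient in \eqref{logbarnes}. This confirms the identity for $|z|$ small enough, and the equality \eqref{Levy-Khintchin} then extends to all $z$ with $\Re(z)>-1$ by analytic continuation, since both sides are holomorphic there.
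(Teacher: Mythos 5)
Your proof is correct and follows essentially the same route as the paper's: the paper's argument is the same computation dressed in probabilistic language, where your moment integral $\int_{0}^{\infty}u^{n-1}\bigl(2\sinh(u/2)\bigr)^{-2}\,\d u=(n-1)!\,\zeta(n-1)$ appears as the Mellin transform of a random variable $Q$ with density $\tfrac{3}{\pi^{2}}\bigl(\tfrac{u/2}{\sinh(u/2)}\bigr)^{2}\,\d u$, established via the same expansion $\sum_{k\geq 1}k\,\mathrm{e}^{-ku}=\bigl(2\sinh(u/2)\bigr)^{-2}$. Both arguments then match the result against \eqref{logbarnes} for $|z|<1$ and conclude by analytic continuation to $\Re(z)>-1$.
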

Before proving Proposition \ref{Levy-Khintchintyperepres}, a few
remarks are in order.
\begin{rem}
Note that formula (\ref{Levy-Khintchin}) cannot be considered
exactly as a L\'{e}vy-Khintchine representation. Indeed, the integral
featured in (\ref{Levy-Khintchin}) consists in integrating the
function
$\left(\exp\left(-zu\right)-1+zu-\frac{u^{2}z^{2}}{2}\right)$
against the measure $\dfrac{\d
u}{u\left(2\sinh\left(u/2\right)\right)^{2}}$, which is not a
L\'{e}vy measure. Indeed, L\'{e}vy measures integrate
$(u^{2}\wedge1)$, which is not the case here because of the
equivalence: $\begin{CD}
u\left(2\sinh\left(u/2\right)\right)^{2}\sim u^{3} \end{CD}$, when
$u\to0$. Also, due to this singularity, one cannot integrate
$\left(\exp\left(-zu\right)-1+zu\mathbf{1}_{\left(u\leq1\right)}\right)$
with respect to this measure, and one is forced to "bring in" under
the integral sign the companion term $\frac{u^{2}z^{2}}{2}$.
\end{rem}
\begin{proof}
From the consideration of the series
\begin{equation}\label{Lz}
    \mathcal{L}\left(z\right):=\sum_{n=3}^{\infty}\left(-1\right)^{n-1}\zeta\left(n-1\right)\dfrac{z^{n}}{n}
\end{equation}
featured in (\ref{logbarnes}), it seems natural to introduce a
random variable $Q$ taking values in $\mathbb{R}_{+}$, with Mellin
transform:
\begin{equation}\label{Mt}
    \mathbb{E}\left[Q^{s}\right]=\left(\dfrac{\mathbb{E}\left[\mathbf{e}^{s+2}\right]}{2}\right)\left(\dfrac{\zeta\left(s+2\right)}{\zeta\left(2\right)}\right)
\end{equation}where $\mathbf{e}$ denotes a standard exponential
variable. A little more analytically, formula (\ref{Mt}) may be
presented as
\begin{equation}\label{Mtbis}
\mathbb{E}\left[Q^{s}\right]=\dfrac{3}{\pi^{2}}\Gamma\left(s+3\right)\zeta\left(s+2\right).
\end{equation}We first show the existence of the random variable $Q$
by computing its density (for  an example of occurrence of
the random variable $Q$ in the theory of stochastic processes and some relation with the theory of the Riemann Zeta function, see e.g. \cite{bianeyor} and \cite{bianepitmanyor}).

From the definition of $Q$ via its Mellin transform, we get for
$f:\;\mathbb{R}_{+}\to\mathbb{R}_{+}$:
$$\mathbb{E}\left[f\left(Q\right)\right]=\dfrac{3}{\pi^{2}}\sum_{n=1}^{\infty}\left(\dfrac{1}{n^{2}}\int_{0}^{\infty}\d t\;t^{2}\exp\left(-t\right)f\left(\dfrac{t}{n}\right)\right)$$
Hence by some elementary change of variables:
$$\mathbb{P}\left(Q\in\d u\right)=\dfrac{3}{\pi^{2}}u^{2}\left(\sum_{n=1}^{\infty}n\exp\left(-nu\right)\right)\d
u.$$Now, since
$\sum_{n=1}^{\infty}\exp\left(-nu\right)=\dfrac{1}{\exp\left(u\right)-1}$,
we get:\\
$\sum_{n=1}^{\infty}n\exp\left(-nu\right)=\dfrac{1}{\left(2\sinh\left(\frac{u}{2}\right)\right)^{2}}$.
Hence,
\begin{equation}\label{densite}
    \mathbb{P}\left(Q\in\d
u\right)=\dfrac{3}{\pi^{2}}\left(\dfrac{u/2}{\sinh\left(\frac{u}{2}\right)}\right)^{2}\d
u
\end{equation}
Now, we consider the following series development for $|z|<1$:
$$\mathbb{E}\left[\dfrac{1}{Q^{3}}\left(\sum_{n=1}^{\infty}\dfrac{\left(-zQ\right)^{n}}{n!}\right)\right]=\dfrac{3}{\pi^{2}}\sum_{n=3}^{\infty}\left(-1\right)^{n}\zeta\left(n-1\right)\dfrac{z^{n}}{n},$$
hence, in comparison with formula (\ref{Lz}) we obtain:
\begin{eqnarray*}
  \mathcal{L}\left(z\right) &=& -\dfrac{\pi^{2}}{3}\mathbb{E}\left[\dfrac{1}{Q^{3}}\left(\sum_{n=1}^{\infty}\dfrac{\left(-zQ\right)^{n}}{n!}\right)\right] \\
   &=&
   -\dfrac{\pi^{2}}{3}\mathbb{E}\left[\dfrac{1}{Q^{3}}\left(\exp\left(-zQ\right)-1+zQ-\frac{Q^{2}z^{2}}{2}\right)\right].
\end{eqnarray*}
Now, formula (\ref{Levy-Khintchin}), for $|z|<1$, follows from
(\ref{densite}) and the fact that:
\begin{eqnarray*}
  \dfrac{1}{G\left(1+z\right)} &=& \exp\left(\log G\left(1+z\right)\right) \\
   &=& \exp\left\{-\dfrac{z}{2}\left(\log
    \left(2\pi\right)-1\right)+\dfrac{1}{2}\left(1+\gamma\right)z^{2}-\mathcal{L}\left(z\right)\right\}.
\end{eqnarray*}The formula extends by analytic continuation to the
case $\Re(z)>-1$.
\end{proof}

\subsection{Proof of Theorem \ref{Barnes-gammaproduct}}
To prove Theorem \ref{Barnes-gammaproduct}, we shall use the
following lemma:
\begin{lem}\label{lemloggam}
For any $a>0$, the random variable $\log\left(\gamma_{a}\right)$ is
infinitely divisible and its L\'{e}vy-Khintchine representation is
given, for $\Re(\lambda)>-a$, by:
\begin{eqnarray*}
    \mathbb{E}\left[\gamma_{a}^{\lambda}\right] &=&
    \dfrac{\Gamma\left(a+\lambda\right)}{\Gamma\left(a\right)} \\
     &=& \exp\left\{\lambda\psi\left(a\right)+\int_{0}^{\infty}\dfrac{\exp\left(-au\right)\left(\exp\left(-\lambda u\right)-1+\lambda u\right)}{u\left(1-\exp\left(-u\right)\right)}\d u\right\}
\end{eqnarray*}where $$\psi\left(a\right)\equiv\dfrac{\Gamma'\left(a\right)}{\Gamma\left(a\right)}.$$
\end{lem}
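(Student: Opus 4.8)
The plan is to reduce the whole statement to a single classical integral representation of the digamma function. By the Mellin transform (\ref{mellingamma}) we already have $\mathbb{E}[\gamma_a^{\lambda}]=\Gamma(a+\lambda)/\Gamma(a)$, so the content of the lemma is entirely the integral representation of $\log\Gamma(a+\lambda)-\log\Gamma(a)$ together with the remark that it is of L\'evy--Khintchine type. My starting point would be the Gauss (Malmsten) formula
\[
\psi(b)=\int_{0}^{\infty}\left(\frac{e^{-t}}{t}-\frac{e^{-bt}}{1-e^{-t}}\right)\d t,\qquad b>0,
\]
which I would either quote or recover in one line from the Weierstrass product for $1/\Gamma$.

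First I would write, for $\lambda\ge 0$, $\log\Gamma(a+\lambda)-\log\Gamma(a)=\int_{0}^{\lambda}\psi(a+s)\,\d s$, substitute the Gauss formula for $\psi(a+s)$, and interchange the $s$- and $t$-integrations by Fubini. The inner integral $\int_{0}^{\lambda}e^{-(a+s)t}\,\d s=e^{-at}(1-e^{-\lambda t})/t$ is elementary, and this yields the compact form
\[
\log\frac{\Gamma(a+\lambda)}{\Gamma(a)}=\int_{0}^{\infty}\left(\frac{\lambda e^{-t}}{t}-\frac{e^{-at}\,(1-e^{-\lambda t})}{t\,(1-e^{-t})}\right)\d t,
\]
whose integrand vanishes to first order at $t=0$ (the two $1/t$ singularities cancel) and decays exponentially at infinity, so that the integral converges absolutely.

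Next I would isolate the drift $\lambda\psi(a)$. Writing $\lambda\psi(a)=\int_{0}^{\infty}\big(\lambda e^{-t}/t-\lambda e^{-at}/(1-e^{-t})\big)\d t$ via the Gauss formula and subtracting it from the display above, the terms $\lambda e^{-t}/t$ cancel and the remaining pieces recombine over the common denominator $t(1-e^{-t})$ into exactly $\int_{0}^{\infty}e^{-at}(e^{-\lambda t}-1+\lambda t)\,\d t/[t(1-e^{-t})]$, which is the asserted identity with $u=t$. Infinite divisibility is then immediate, since this is of L\'evy--Khintchine form: the Gaussian component is absent, $\psi(a)$ is the drift, and $\nu(\d u)=e^{-au}\,\d u/[u(1-e^{-u})]$ is a genuine L\'evy measure --- near $0$ it is $\sim \d u/u^{2}$, so $\int(u^{2}\wedge 1)\,\nu(\d u)<\infty$, while at infinity it decays exponentially, the extra linear term $\lambda u$ being integrable there and merely renormalising the drift. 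The extension from $\lambda\ge 0$ to $\Re(\lambda)>-a$ is by analytic continuation.

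The main obstacle is purely the convergence bookkeeping: each of the terms $\lambda e^{-t}/t$ and $e^{-at}/[t(1-e^{-t})]$ is individually non-integrable at $t=0$, so neither the Fubini step nor the final regrouping may be performed term by term. The discipline required is to keep these singular terms grouped throughout --- equivalently, to work with the fully compensated integrand $e^{-\lambda u}-1+\lambda u=O(u^{2})$ at the origin --- and to peel off the finite drift $\lambda\psi(a)$ only at the very end, a limiting argument over $[\varepsilon,\infty)$ making every manipulation legitimate. This is precisely the analytic reason the second-order compensator appears, exactly as the companion term $u^{2}z^{2}/2$ was forced into the representation of $1/G(1+z)$ in Proposition \ref{Levy-Khintchintyperepres}.
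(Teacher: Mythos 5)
Your proof is correct and follows exactly the route the paper indicates: the paper's own ``proof'' of Lemma \ref{lemloggam} is a one-line citation (``it follows from (\ref{mellingamma}) and some integral representation of the $\psi$-function, see Lebedev and Carmona--Petit--Yor''), and your derivation via Gauss's formula for $\psi$, Fubini, regrouping under the fully compensated integrand $e^{-\lambda u}-1+\lambda u$, and analytic continuation to $\Re(\lambda)>-a$ is precisely that classical argument, carried out in full and with the convergence issues handled properly.
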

\begin{proof}
This is classical: it follows from (\ref{mellingamma}) and some
integral representation of the $\psi$-function, see, e.g. Lebedev
\cite{Lebedev} and Carmona-Petit-Yor \cite{carmona-petit-yor} where
this lemma is also used.
\end{proof}
We are now in a position to prove Theorem \ref{Barnes-gammaproduct}.
We start by proving the first part, i.e. formula
(\ref{mellinlimigamma}). Let us write
$$I_{N}\left(\lambda\right):=\dfrac{1}{N^{\lambda^{2}/2}}\mathbb{E}\left[\left(\prod_{j=1}^{N}\gamma_{j}\right)^{\lambda}\right].$$Thus
with the help of Lemma \ref{lemloggam}, we obtain:
\begin{equation}\label{a1}
N^{\lambda^{2}/2}I_{N}\left(\lambda\right)=\exp\left\{\lambda\sum_{j=1}^{N}\psi\left(j\right)+\int_{0}^{\infty}\dfrac{\sum_{j=1}^{N}\exp\left(-ju\right)}{u\left(1-\exp\left(-u\right)\right)}\left(\exp\left(-\lambda
u\right)-1+\lambda u\right)\d u\right\}
\end{equation}
Note that:
$$\dfrac{1}{u\left(1-\exp\left(-u\right)\right)}\sum_{j=1}^{N}\exp\left(-ju\right)=\dfrac{\exp\left(-u\right)\left(1-\exp\left(-Nu\right)\right)}{u\left(1-\exp\left(-u\right)\right)^{2}}$$
and we may now write:
\begin{equation}\label{a2}
I_{N}\left(\lambda\right)=\exp\left\{\lambda\sum_{j=1}^{N}\psi\left(j\right)+J_{N}\left(\lambda\right)\right\}
\end{equation}where
\begin{equation}\label{a3}
    J_{N}\left(\lambda\right)=\left\{\int_{0}^{\infty}\dfrac{\d u\exp\left(-u\right)}{u\left(1-\exp\left(-u\right)\right)^{2}}\left(1-\exp\left(-Nu\right)\right)\left(\exp\left(-\lambda u\right)-1+\lambda
    u\right)\right\}-\dfrac{\lambda^{2}}{2}\log N
\end{equation}
Next, we shall show that:
$$\begin{CD}
J_{N}\left(\lambda\right)@>>N\to\infty>
J_{\infty}\left(\lambda\right) \end{CD}$$together with some integral
expression for $J_{\infty}\left(\lambda\right)$, from which it will
be easily deduced how $J_{\infty}\left(\lambda\right)$ and
$G\left(1+\lambda\right)$ are related thanks to Proposition
\ref{Levy-Khintchintyperepres}.

We now write (\ref{a3}) in the form:
\begin{equation}\label{a4}
\begin{split}
J_{N}\left(\lambda\right)=\int_{0}^{\infty}\dfrac{\d
u\exp\left(-u\right)}{u\left(1-\exp\left(-u\right)\right)^{2}}\left(1-\exp\left(-Nu\right)\right)\left(\exp\left(-\lambda
u\right)-1+\lambda
    u-\dfrac{\lambda^{2}u^{2}}{2}\right)\\
    \shoveleft{+\dfrac{\lambda^{2}}{2}\left(\int_{0}^{\infty}\dfrac{\d
u
u\exp\left(-u\right)}{\left(1-\exp\left(-u\right)\right)^{2}}\left(1-\exp\left(-Nu\right)\right)-\log
N\right)}
\end{split}
\end{equation}Now letting $N\to\infty$, we obtain:
\begin{equation}\label{a5}
    J_{N}\left(\lambda\right)\to J_{\infty}\left(\lambda\right),
\end{equation}with
\begin{eqnarray*}
  J_{\infty}\left(\lambda\right) &=& \int_{0}^{\infty}\dfrac{\d
u\exp\left(-u\right)}{u\left(1-\exp\left(-u\right)\right)^{2}}\left(\exp\left(-\lambda
u\right)-1+\lambda
    u-\dfrac{\lambda^{2}u^{2}}{2}\right)+\dfrac{\lambda^{2}}{2}C \\
   &\equiv& \widetilde{J}_{\infty}\left(\lambda\right)+\dfrac{\lambda^{2}}{2}C
\end{eqnarray*}
where
$$C=\lim_{N\to\infty}\left(\int_{0}^{\infty}\dfrac{\d
u
u\exp\left(-u\right)}{\left(1-\exp\left(-u\right)\right)^{2}}\left(1-\exp\left(-Nu\right)\right)-\log
N\right),$$a limit which we shall show to exist and identify to be
$1+\gamma(=C)$ with the following lemma:
\begin{lem}\label{bonlem}
We have:
$$\int_{0}^{\infty}\dfrac{u\exp\left(-u\right)}{\left(1-\exp\left(-u\right)\right)^{2}}\left(1-\exp\left(-Nu\right)\right)du=\log
N+\left(1+\gamma\right)+o(1).$$Consequently, we have:
$$C=1+\gamma.$$
\end{lem}
\begin{proof}
\begin{multline*}
\int_{0}^{\infty}\dfrac{u\exp\left(-u\right)}{\left(1-\exp\left(-u\right)\right)^{2}}\left(1-\exp\left(-Nu\right)\right)du=\int_{0}^{\infty}\dfrac{u\exp\left(-u\right)}{\left(1-\exp\left(-u\right)\right)}\left(\sum_{k=0}^{N-1}\exp\left(-ku\right)\right)du\\
=\sum_{k=1}^{N}\int_{0}^{\infty}\dfrac{u}{\left(1-\exp\left(-u\right)\right)}\exp\left(-ku\right)du=\sum_{k=1}^{N}\int_{0}^{\infty}du\;\exp\left(-ku\right)u\sum_{r=0}^{\infty}\exp\left(-ru\right)\\
=\sum_{k=1}^{N}\sum_{r=0}^{\infty}\int_{0}^{\infty}du\;u\exp\left(-\left(r+k\right)u\right)=\sum_{k=1}^{N}\sum_{r=0}^{\infty}\dfrac{1}{\left(r+k\right)^{2}}
\end{multline*}
Now, we write:
\begin{eqnarray*}
  \sum_{k=1}^{N}\sum_{r=0}^{\infty}\dfrac{1}{\left(r+k\right)^{2}} &=& \zeta\left(2\right)+\sum_{k=1}^{N-1}\left(\zeta\left(2\right)-\sum_{s=1}^{k}\dfrac{1}{s^{2}}\right) \\
   &=& N\zeta\left(2\right)-\sum_{k=1}^{N-1}\sum_{s=1}^{k}\dfrac{1}{s^{2}} \\
   &=& N\zeta\left(2\right)-\sum_{s=1}^{N}\dfrac{N-s}{s^{2}} \\
   &=&
   N\sum_{s=N+1}^{\infty}\dfrac{1}{s^{2}}+\dfrac{1}{N}+\sum_{s=1}^{N}\dfrac{1}{s}-\dfrac{1}{N}
\end{eqnarray*}The result now follows easily from the facts:
\begin{eqnarray*}
  \lim_{N\rightarrow\infty}N\sum_{s=N+1}^{\infty}\dfrac{1}{s^{2}} &=& 1 \\
  \sum_{s=1}^{N}\dfrac{1}{s} &=& \log N+\gamma+o(1)
\end{eqnarray*}
\end{proof}
We have thus proved so far that:
\begin{equation}\label{a7}
\lim_{N\to\infty}\dfrac{1}{N^{\lambda^{2}/2}}\mathbb{E}\left[\left(\prod_{j=1}^{N}\gamma_{j}\right)^{\lambda}\right]\exp\left(-\lambda\sum_{j=1}^{N}\psi\left(j\right)\right)=\exp\left(J_{\infty}\left(\lambda\right)\right),
\end{equation}where $$J_{\infty}\left(\lambda\right)=\int_{0}^{\infty}\dfrac{\d
u\exp\left(-u\right)}{u\left(1-\exp\left(-u\right)\right)^{2}}\left(\exp\left(-\lambda
u\right)-1+\lambda
    u-\dfrac{\lambda^{2}u^{2}}{2}\right)+\dfrac{\lambda^{2}}{2}\left(1+\gamma\right).$$We
    can still rewrite $J_{\infty}\left(\lambda\right)$ as:
\begin{equation}\label{a8}
J_{\infty}\left(\lambda\right)=\int_{0}^{\infty}\dfrac{\d
u}{u\left(2\sinh\left(u/2\right)\right)^{2}}\left(\exp\left(-\lambda
u\right)-1+\lambda
    u-\dfrac{\lambda^{2}u^{2}}{2}\right)+\dfrac{\lambda^{2}}{2}\left(1+\gamma\right).
\end{equation}Now comparing (\ref{Levy-Khintchin}) and (\ref{a8}),
we obtain:
\begin{equation}\label{a9}
    \exp\left(J_{\infty}\left(\lambda\right)\right)=\left(A^{\lambda}G\left(1+\lambda\right)\right)^{-1}
\end{equation}Plugging (\ref{a9}) in (\ref{a7}) yields the
first part of Theorem \ref{Barnes-gammaproduct}:
$$\lim_{N\to\infty}\dfrac{1}{N^{\lambda^{2}/2}}\mathbb{E}\left[\left(\prod_{j=1}^{N}\gamma_{j}\right)^{\lambda}\right]\exp\left(-\lambda\sum_{j=1}^{N}\psi\left(j\right)\right)=\left(\sqrt{\dfrac{2\pi}{\mathrm{e}}}\right)^{\lambda}\dfrac{1}{G\left(1+\lambda\right)}$$

To prove the second part of Theorem \ref{Barnes-gammaproduct},
we use formula (\ref{KS-gamma}) together with formula
(\ref{mellinlimigamma}). Formula (\ref{KS-gamma}) yields:
\begin{equation}\label{a10}
\dfrac{1}{N^{2\lambda^{2}}}\mathbb{E}\left[\left(\prod_{j=1}^{N}\gamma_{j}\right)^{2\lambda}\right]=\left(\dfrac{1}{N^{\lambda^{2}}}\mathbb{E}_{N}\left[|Z_{N}|^{2\lambda}\right]\right)\left(\left(\dfrac{1}{N^{\lambda^{2}/2}}\mathbb{E}\left[\left(\prod_{j=1}^{N}\gamma_{j}\right)^{\lambda}\right]\right)^{2}\right)
\end{equation}
Multiplying both sides by
$\exp\left(-2\lambda\sum_{j=1}^{N}\psi\left(j\right)\right)$ and
using (\ref{mellinlimigamma}) we obtain:
$$\lim_{N\rightarrow\infty}\dfrac{1}{N^{\lambda^{2}}}\mathbb{E}_{N}\left[|Z|^{2\lambda}\right]=\dfrac{\left(G\left(1+\lambda\right)\right)^{2}}{G\left(1+2\lambda\right)},$$
which completes the proof of Theorem \ref{Barnes-gammaproduct}.
\section{Generalized Gamma Convolutions}
\subsection{Definition and examples}
We recall the definition of a GGC variable.
\begin{defn}
A random variable $Y$ is called a GGC variable if it
is infinitely divisible with L\'{e}vy measure $\nu$ of the form:
\begin{equation}\label{b1}
    \nu\left(\d x\right)=\dfrac{\d x}{x}\int \mu\left(\d
    \xi\right)\exp\left(-\xi x\right),
\end{equation}where $\mu\left(\d
    \xi\right)$ is a Radon measure on $\mathbb{R}_{+}$, called the Thorin measure of $Y$.
\end{defn}
\begin{rem}
$Y$ is a selfdecomposable random variable because its L\'{e}vy
measure can be written as $\nu\left(\d x\right)=\dfrac{\d
x}{x}h\left(x\right)$ with $h$ a decreasing function (see, e.g.
\cite{Sato}, p.95).
\end{rem}
\begin{rem}
We shall require $Y$ to have finite first and second moments; these
moments can be easily computed with the help of the Thorin measure
$\mu\left(\d
    \xi\right)$:
    \begin{eqnarray*}
      \mathbb{E}\left[Y\right] &=& \mu_{-1}=\int \mu\left(\d
    \xi\right)\dfrac{1}{\xi} \\
      \sigma^{2}=\mathbb{E}\left[Y^{2}\right]-\left(\mathbb{E}\left[Y\right]\right)^{2} &=& \mu_{-2}=\int \mu\left(\d
    \xi\right)\dfrac{1}{\xi^{2}}
    \end{eqnarray*}
\end{rem}
Now we give some examples of GGC variables. Of course,
$\gamma_{a}$ falls into this category with $\mu\left(\d
    \xi\right)=a\delta_{1}\left(\d
    \xi\right)$ where $\delta_{1}\left(\d
    \xi\right)$ is the Dirac measure at $1$.

More generally, the next proposition gives a large set of such
variables:
\begin{prop}
Let $f$ be a nonnegative Borel  function such that
$$\int_{0}^{\infty}\d
u\log\left(1+f\left(u\right)\right)<\infty,$$and let
$\left(\gamma_{u}\right)$ denote the standard gamma process. Then
the variable $Y$ defined as
\begin{equation}\label{b2}
    Y=\int_{0}^{\infty}\d\gamma_{u}f\left(u\right)
\end{equation}is a GGC variable.
\end{prop}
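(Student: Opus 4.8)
The plan is to compute the Laplace transform of $Y$ directly from the subordinator structure of the gamma process and then read off its L\'evy measure. Recall that $(\gamma_u)_{u\geq 0}$ is a subordinator with Laplace exponent $\log(1+\lambda)$, i.e. $\mathbb{E}[\exp(-\lambda\gamma_u)]=(1+\lambda)^{-u}=\exp(-u\log(1+\lambda))$, with L\'evy measure $\rho(\mathrm{d}x)=\frac{\exp(-x)}{x}\,\mathrm{d}x$ on $(0,\infty)$, the latter coming from the Frullani identity $\log(1+\lambda)=\int_0^\infty(1-\exp(-\lambda x))\frac{\exp(-x)}{x}\,\mathrm{d}x$. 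First I would establish, for $\lambda\geq 0$, the formula
\[
\mathbb{E}\left[\exp(-\lambda Y)\right]=\exp\left(-\int_0^\infty \log(1+\lambda f(u))\,\mathrm{d}u\right).
\]
This holds immediately for simple $f=\sum_i c_i\mathbf{1}_{(a_i,b_i]}$ by independence of the increments of $\gamma$, and extends to general nonnegative Borel $f$ by approximating from below and invoking monotone convergence on both sides. This simultaneously shows that $Y$ is well defined (finite a.s.) and infinitely divisible.

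Next I would use the hypothesis to guarantee finiteness of the exponent for every $\lambda>0$. For $0<\lambda\leq 1$ one has $\log(1+\lambda f)\leq \log(1+f)$, while for $\lambda\geq 1$ the elementary inequality $\log(1+\lambda f)\leq \lambda\log(1+f)$ (both sides vanish at $f=0$, and the derivative comparison $\frac{\lambda}{1+\lambda f}\leq \frac{\lambda}{1+f}$ holds) gives $\int_0^\infty\log(1+\lambda f(u))\,\mathrm{d}u\leq \lambda\int_0^\infty\log(1+f(u))\,\mathrm{d}u<\infty$. Thus the assumption $\int_0^\infty \log(1+f(u))\,\mathrm{d}u<\infty$ is exactly what makes $-\log\mathbb{E}[\exp(-\lambda Y)]$ finite for all $\lambda>0$, which in turn forces $\int_0^\infty(1\wedge y)\,\nu(\mathrm{d}y)<\infty$ since $1-\exp(-y)\asymp 1\wedge y$.

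To exhibit the L\'evy measure in Thorin form, I would insert the Frullani representation of $\log(1+\lambda f(u))$ into the exponent, apply Tonelli (everything is nonnegative), and perform the change of variable $y=f(u)x$ on $\{f(u)>0\}$:
\[
\int_0^\infty\log(1+\lambda f(u))\,\mathrm{d}u=\int_0^\infty (1-\exp(-\lambda y))\,\frac{\mathrm{d}y}{y}\left(\int_0^\infty \exp\!\left(-\frac{y}{f(u)}\right)\mathrm{d}u\right).
\]
Reading this against the general form $-\log\mathbb{E}[\exp(-\lambda Y)]=\int_0^\infty(1-\exp(-\lambda y))\,\nu(\mathrm{d}y)$ identifies the L\'evy measure $\nu(\mathrm{d}y)=\frac{\mathrm{d}y}{y}\int_0^\infty\exp(-y/f(u))\,\mathrm{d}u$. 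Writing $\xi(u)=1/f(u)$ and letting $\mu$ be the image of Lebesgue measure on $\{f>0\}$ under $u\mapsto \xi(u)$, the inner integral equals $\int \mu(\mathrm{d}\xi)\exp(-\xi y)$, so $\nu$ has precisely the form (\ref{b1}) and $Y$ is a GGC with Thorin measure $\mu$.

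The points requiring care are the rigorous justification of the Laplace-transform formula for the integral against the gamma process (the approximation and monotone-convergence argument, including a.s. finiteness of $Y$) and the verification that $\mu$ is genuinely a Radon measure on $(0,\infty)$. For the latter, for $0<\varepsilon<M$ one has $\mu([\varepsilon,M])\leq \mu((0,M])=\mathrm{Leb}(\{u:f(u)\geq 1/M\})$, and since $\log(1+f)\geq \log(1+1/M)>0$ on that set, this Lebesgue measure is bounded by $\left(\log(1+1/M)\right)^{-1}\int_0^\infty\log(1+f(u))\,\mathrm{d}u<\infty$; hence $\mu$ is finite on compact subsets of $(0,\infty)$, as required. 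All remaining manipulations (Tonelli, the change of variables) are routine given the nonnegativity of $f$.
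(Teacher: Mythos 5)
Your proposal is correct and follows essentially the same route as the paper: compute the Laplace transform of $Y$ by simple-function approximation (your $\exp\left(-\int_{0}^{\infty}\log\left(1+\lambda f\left(u\right)\right)\d u\right)$ is, via Frullani, exactly the paper's first displayed line), then change variables $y=f\left(u\right)x$ to exhibit the L\'evy measure in Thorin form. The only difference is that you carefully verify the finiteness of the exponent under the integrability hypothesis and the Radon property of $\mu$, details the paper leaves implicit.
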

\begin{proof}
It is easily shown, by approximating $f$ by simple
functions  that
\begin{eqnarray*}
  \mathbb{E}\left[\exp\left(-\lambda Y\right)\right] &=& \exp\left(-\int_{0}^{\infty}\d u\int_{0}^{\infty}\dfrac{\d x}{x}\exp\left(-x\right)\left(1-\exp\left(-\lambda f\left(u\right)x\right)\right)\right) \\
   &=& \exp\left(-\int_{0}^{\infty}\dfrac{\d y}{y}\left(\int_{0}^{\infty}\d u\exp\left(-\dfrac{y}{f\left(u\right)}\right)\right)\left(1-\exp\left(-\lambda y\right)\right)\right)
\end{eqnarray*}which yields the result.
\end{proof}
For much more details on GGC variables, see \cite{jvy}.
\subsection{Proof of Theorem \ref{limitgengam}}
We now prove Theorem \ref{limitgengam}, which is a natural extension
for Theorem \ref{gamma-barnes}. Recall that in (\ref{sum}), we have
defined $S_{N}$ as:
$$S_{N}=\sum_{n=1}^{N}\left(Y_{n}-\mathbb{E}\left[Y_{n}\right]\right)$$where $$Y_{n}=\dfrac{1}{n}\left(y_{1}^{(n)}+\ldots+y_{n}^{(n)}\right),$$ and where $\left(y_{i}^{(n)}\right)_{1\leq i\leq n<\infty}$ are
independent, with the same distribution as a given GGC
variable $Y$, which has a second moment. For any $\lambda\geq0$, we
have:
\begin{equation}\label{b3}
\mathbb{E}\left[\exp\left(-\lambda
S_{N}\right)\right]=\prod_{n=1}^{N}\left(\widetilde{\varphi}\left(\dfrac{\lambda}{n}\right)\right)^{n}
\end{equation}where \begin{equation}\label{b4}
\widetilde{\varphi}\left(\lambda\right)=\mathbb{E}\left[\exp\left(-\lambda
\left(Y-\mathbb{E}\left[Y\right]\right)\right)\right].
\end{equation}Now, using the form (\ref{b1}) of the L\'{e}vy-Khintchine representation
for $Y$, we obtain:
\begin{eqnarray}\label{b5}
\prod_{n=1}^{N}\left(\widetilde{\varphi}\left(\dfrac{\lambda}{n}\right)\right)^{n} & = & \exp\left\{-\sum_{n=1}^{N}n\int
\nu\left(\d
x\right)\left(1-\dfrac{\lambda}{n}x-\exp\left(-\dfrac{\lambda}{n}x\right)\right)\right\} \notag \\ 
 & = & \exp\left(-\int\mu\left(\d\xi\right)I_{N}\left(\xi,\lambda\right)\right).
\end{eqnarray}
where:
    \begin{eqnarray*}
      I_{N}\left(\xi,\lambda\right) &=& \sum_{n=1}^{N}n\int_{0}^{\infty}
\dfrac{\d x}{x}\exp\left(-\xi
    x\right)\left(1-\dfrac{\lambda}{n}x-\exp\left(-\dfrac{\lambda}{n}x\right)\right) \\
       &=& \sum_{n=1}^{N}n\int_{0}^{\infty}
\dfrac{\d y}{y}\exp\left(-n\xi
    y\right)\left(1-\lambda y-\exp\left(-\lambda y\right)\right) \\
       &=& \int_{0}^{\infty}
\dfrac{\d y}{y}\left(\sum_{n=1}^{N}n\exp\left(-n\xi
    y\right)\right)\left(1-\lambda y-\exp\left(-\lambda y\right)\right)
    \end{eqnarray*}
Some elementary calculations yield:
$$\sum_{n=1}^{N}n\exp\left(-na\right)=\dfrac{\exp\left(a\right)\left(1-\exp\left(-aN\right)\right)}{\left(\exp\left(a\right)-1\right)^{2}}-\dfrac{N\exp\left(-aN\right)}{\left(\exp\left(a\right)-1\right)}.$$
Consequently, taking $a=\xi y$ in the formula for
$I_{N}\left(\xi,\lambda\right)$, we can write it as:
\begin{equation}\label{b6}
    I_{N}\left(\xi,\lambda\right)=J_{N}\left(\xi,\lambda\right)-R_{N}\left(\xi,\lambda\right)
\end{equation}where \begin{equation}\label{b7}
    J_{N}\left(\xi,\lambda\right)=\int_{0}^{\infty}
\dfrac{\d y}{y}\left(1-\lambda y-\exp\left(-\lambda
y\right)\right)\left[\dfrac{\exp\left(\xi
y\right)\left(1-\exp\left(-\xi yN\right)\right)}{\left(\exp\left(\xi
y\right)-1\right)^{2}}\right]
\end{equation}and
\begin{equation}\label{b8}
R_{N}\left(\xi,\lambda\right)=\int_{0}^{\infty} \dfrac{\d
y}{y}\left(1-\lambda y-\exp\left(-\lambda
y\right)\right)\left[\dfrac{N\exp\left(-\xi
yN\right)}{\left(\exp\left(\xi y\right)-1\right)}\right]
\end{equation}
It is clear that
$$\lim_{N\to\infty}R_{N}\left(\xi,\lambda\right)=0.$$Now let us
study $J_{N}\left(\xi,\lambda\right)$ when $N\to\infty$. Let us
"bring in" the additional term $\dfrac{\lambda^{2}y^{2}}{2}$; more
precisely, we rewrite $J_{N}\left(\xi,\lambda\right)$ as:
\begin{equation}\label{b9}
\begin{split}
J_{N}\left(\xi,\lambda\right)=\int_{0}^{\infty} \dfrac{\d
y}{y}\left(1-\lambda
y+\dfrac{\lambda^{2}y^{2}}{2}-\exp\left(-\lambda
y\right)\right)\left[\dfrac{\exp\left(\xi
y\right)\left(1-\exp\left(-\xi yN\right)\right)}{\left(\exp\left(\xi
y\right)-1\right)^{2}}\right]\\
    \shoveleft{-\dfrac{\lambda^{2}}{2}\int\d y\;y\dfrac{\exp\left(\xi
y\right)\left(1-\exp\left(-\xi yN\right)\right)}{\left(\exp\left(\xi
y\right)-1\right)^{2}}}
\end{split}
\end{equation}
Hence:
\begin{equation}\label{b10}
J_{N}\left(\xi,\lambda\right)=o(1)+\int_{0}^{\infty} \dfrac{\d
y}{y}\dfrac{1}{\left(2\sinh\left(\xi
y/2\right)\right)^{2}}\left(1-\lambda
y+\dfrac{\lambda^{2}y^{2}}{2}-\exp\left(-\lambda
y\right)\right)-\dfrac{\lambda^{2}}{2}K_{N}\left(\xi\right),
\end{equation}where
\begin{eqnarray*}
  K_{N}\left(\xi\right) &=& \int\d y\;y\dfrac{\exp\left(\xi
y\right)\left(1-\exp\left(-\xi yN\right)\right)}{\left(\exp\left(\xi
y\right)-1\right)^{2}} \\
   &=&
   \dfrac{1}{\xi^{2}}\int_{0}^{\infty}\dfrac{u\exp\left(-u\right)}{\left(1-\exp\left(-u\right)\right)^{2}}\left(1-\exp\left(-Nu\right)\right)du.
\end{eqnarray*}
Moreover, from Lemma \ref{bonlem}, we
have:\begin{equation}\label{b11}
K_{N}\left(\xi\right)=\dfrac{1}{\xi^{2}}\left(\log
N+1+\gamma+o(1)\right).
\end{equation}
Now using the above asymptotics, and integrating with respect to
$\mu\left(\d\xi\right)$ (see (\ref{b5})), we obtain:
\begin{multline}\label{b12}
\prod_{n=1}^{N}\left(\widetilde{\varphi}\left(\dfrac{\lambda}{n}\right)\right)^{n}= \\ 
\exp\left\{o(1)+\dfrac{\lambda^{2}}{2}\mu_{-2}\left(\log
N+1+\gamma\right)+\int_{0}^{\infty}\dfrac{\d
y}{y}\Sigma_{\mu}\left(y\right)\left(1-\lambda
y+\dfrac{\lambda^{2}y^{2}}{2}-\exp\left(-\lambda
y\right)\right)\right\}
\end{multline}Hence we have finally proved that if
$\lambda>0$,
\begin{equation}\label{generalgammalimit}
\begin{CD}
\dfrac{1}{N^{\frac{\lambda^{2}\sigma^{2}}{2}}}\mathbb{E}\left[\exp\left(-\lambda
S_{N}\right)\right]@>>N\to\infty> \mathcal{H}\left(\lambda\right),
\end{CD}
\end{equation}where the function $\mathcal{H}\left(\lambda\right)$
is given by:
\begin{equation}\label{Hlambda}
    \mathcal{H}\left(\lambda\right)=\exp\left\{\dfrac{\lambda^{2}\sigma^{2}}{2}\left(1+\gamma\right)+\int_{0}^{\infty}\dfrac{\d y}{y}\Sigma_{\mu}\left(y\right)\left(\exp\left(-\lambda y\right)-1+\lambda
    y-\dfrac{\lambda^{2}y^{2}}{2}\right)\right\},
\end{equation}with
\begin{equation}\label{defdesigmamu}
\Sigma_{\mu}\left(y\right)=\int\mu\left(\d
    \xi\right)\dfrac{1}{\left(2\sinh\left(\dfrac{\xi
    y}{2}\right)\right)^{2}},
\end{equation}which is Theorem \ref{limitgengam}.

\newpage

\end{document}